\theoremstyle{plain}
\newtheorem{thm}{Theorem}[section]
\newtheorem*{thm*}{Theorem}
\newtheorem*{cor*}{Corollary}
\newtheorem{prop}[thm]{Proposition}
\newtheorem{lemma}[thm]{Lemma}
\newtheorem{cor}[thm]{Corollary}
\newtheorem*{claim*}{Claim}
\theoremstyle{definition}
\newtheorem{defn}[thm]{Definition}
\newtheorem{remark}[thm]{Remark}
\newtheorem{fact}[thm]{Fact}
\theoremstyle{remark}
\numberwithin{equation}{thm}
\def\Z{\mathbb{Z}}
\def\Min{\operatorname{Min}}
\def\Ext{\operatorname{Ext}}
\def\Hom{\operatorname{Hom}}
\def\e{\mathrm{e}}
\def\m{\mathfrak m}
\def\n{\mathfrak n}
\newcommand{\rme}{\mathrm{e}}
\newcommand{\rmr}{\mathrm{r}}
\newcommand{\rmv}{\mathrm{v}}
\newcommand{\rmK}{\mathrm{K}}
\newcommand{\fka}{\mathfrak{a}}
\newcommand{\fkm}{\mathfrak{m}}
\newcommand{\fkn}{\mathfrak{n}}
\newcommand{\fkp}{\mathfrak{p}}
\newcommand{\fkq}{\mathfrak{q}}
\newcommand{\fkM}{\mathfrak{M}}
\newcommand{\fkP}{\mathfrak{P}}
\newcommand{\mapright}[1]{%
\smash{\mathop{%
\hbox to 1cm{\rightarrowfill}}\limits^{#1}}}
\newcommand{\mapleft}[1]{%
\smash{\mathop{%
\hbox to 1cm{\leftarrowfill}}\limits_{#1}}}
\def\depth{\operatorname{depth}}
\def\AGL{\operatorname{AGL}}
\def\Ass{\operatorname{Ass}}
\def\id{\operatorname{id}}
\title{When is $R \ltimes I$ an almost Gorenstein local ring?}
\author{Shiro Goto}
\address{Department of Mathematics, School of Science and Technology, Meiji University, 1-1-1 Higashi-mita, Tama-ku, Kawasaki 214-8571, Japan}
\email{shirogoto@gmail.com}
\author{Shinya Kumashiro}
\address{Department of Mathematics and Informatics, Graduate School of Science and Technology, Chiba University, Chiba-shi 263, Japan}
\email{polar1412@gmail.com}
\thanks{The first author was partially supported by JSPS Grant-in-Aid for Scientific Research (C) 25400051.}
\begin{document}
\maketitle

\setlength{\baselineskip} {17pt}

\begin{abstract}
Let $(R, \m) $ be a Gorenstein local ring of dimension $d > 0$ and let $I$ be an ideal of $R$ such that $(0) \ne I \subsetneq R$ and $R/I$ is a Cohen-Macaulay ring of dimension $d$. There is given a complete answer to the question of when the idealization $A = R \ltimes I$ of $I$ over $R$ is an almost Gorenstein local ring.
\end{abstract}


\section{Introduction}\label{section1}
Let $(R,\m)$ be a Gorenstein local ring of dimension $d > 0$ with infinite residue class field. Assume that $R$ is a homomorphic image of a regular local ring. With this notation the purpose of this paper is to prove the following theorem.

\begin{thm}\label{maintheorem}
Let $I$ be a non-zero ideal of $R$ and suppose that $R/I$ is a Cohen-Macaulay ring of dimension $d$. Let $A = R\ltimes I$ denote the idealization of $I$ over $R$. Then the following conditions are equivalent.
\begin{enumerate}
\item[$(1)$] $A=R\ltimes I$ is an almost Gorenstein local ring. 
\item[$(2)$] $R$ has the presentation $R = S/[(X) \cap (Y)]$ where $S$ is a regular local ring of dimension $d +1$ and $X,Y$ is a part of a regular system of parameters of $S$ such that $I = XR$.

\end{enumerate}
\end{thm}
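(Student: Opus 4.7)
For (2) $\Rightarrow$ (1): Since $X, Y$ is part of a regular system of parameters of $S$, the sequence $X, Y$ is regular and $(X) \cap (Y) = (XY)$, so $R = S/(XY)$ is a hypersurface and $I = XR \cong S/(Y)$. The plan is to present $A = R \ltimes I$ as a quotient of the regular local ring $T = S[[Z]]$ of dimension $d+2$ via $Z \mapsto (0, X) \in A$; the kernel is $J = (XY, YZ, Z^2)$, of height $2$. Computing the minimal free resolution of $A$ over $T$,
\begin{equation*}
0 \to T^2 \xrightarrow{M} T^3 \xrightarrow{(XY,\, YZ,\, Z^2)} T \to A \to 0, \qquad M = \begin{pmatrix} Z & 0 \\ -X & -Z \\ 0 & Y \end{pmatrix},
\end{equation*}
and dualizing yields $K_A = T^2/N$ with $N = \langle (Z, 0),\, (-X, -Z),\, (0, Y) \rangle$. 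Letting $\epsilon_1, \epsilon_2$ denote the images of the standard basis of $T^2$, I would take $\omega = \epsilon_1 + \epsilon_2$ as the canonical embedding $A \hookrightarrow K_A$; $\ann_A(\omega) = 0$ follows by verifying that $\omega$ generates $K_A$ locally at each of the two minimal primes of $A$, namely $(X, Z)A$ and $(Y, Z)A$. The cokernel $C = K_A/A\omega$ is then cyclic, generated by $\epsilon_1$ with $X\epsilon_1 = Y\epsilon_1 = Z\epsilon_1 = 0$ (using the relations and the new identity $\epsilon_2 \equiv -\epsilon_1$), so $C \cong A/(X, Y, Z)A = S/(X, Y)$, a regular local ring of dimension $d - 1$. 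Hence $\mu_A(C) = 1 = e_{\m_A}(C)$, $C$ is Ulrich, and $A$ is almost Gorenstein.

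For (1) $\Rightarrow$ (2): First, I would reduce to $d = 1$. A generic element $f \in \m_R$ is a non-zerodivisor on both $R$ and $R/I$ (by the Cohen-Macaulay hypotheses), hence on $A \cong R \oplus I$ as $R$-modules, and $A/fA \cong (R/fR) \ltimes (I/fI)$ preserves all the hypotheses: $R/fR$ is Gorenstein of dimension $d - 1$ and $(R/fR)/(I/fI) = R/(I+fR)$ is Cohen-Macaulay of dimension $d - 1$. Combined with the standard fact that the almost Gorenstein property descends modulo a generic superficial element of $\m_A$, induction reduces to $\dim R = 1$.

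In the case $d = 1$, the almost Gorenstein condition becomes $\m_A K_A \subseteq \omega A$ for some embedding $\omega : A \hookrightarrow K_A$. Using $K_A \cong R \oplus \Hom_R(I, R)$ with explicit $A$-action $(r, i)(s, \phi) = (rs + \phi(i),\, r\phi)$, a careful bookkeeping of minimal generators and of the evaluation pairing $I \otimes_R \Hom_R(I, R) \to R$ forces $\mu_R(I) = 1$. Writing $I = xR$, its annihilator $\ann_R(x) = yR$ must also be principal, and $R$ must be a hypersurface of multiplicity $2$. Lifting $x, y$ to part of a regular system of parameters $X, Y$ in a regular local ring $S$ of dimension $d + 1 = 2$ presenting $R$, the relation $xy = 0$ in $R$ combined with a multiplicity comparison yields $\ker(S \to R) = (X) \cap (Y) = (XY)$ and $I = XR$.

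The hardest step is (1) $\Rightarrow$ (2), specifically the $d = 1$ analysis: extracting from the abstract almost Gorenstein hypothesis the explicit presentation ``$R$ is a hypersurface $S/(XY)$ with $I$ principal, generated by a lift of $X$''. This requires tight numerical control over $\Hom_R(I, R)$, its minimal generators, and the trace ideal $I \cdot \Hom_R(I, R) \subseteq R$, which is the technical core of the paper.
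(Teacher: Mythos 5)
Your (2) $\Rightarrow$ (1) argument is essentially correct and takes a more computational route than the paper: where the paper writes $A = S[T]/(XY,YT,T^2)$, recognizes this ideal as the $2\times 2$ minors of a matrix whose entries form part of a regular system of parameters, and simply invokes \cite[Theorem 7.8]{GTT}, you verify the property by hand via the Hilbert--Burch resolution, the presentation $\rmK_A = T^2/N$, and the element $\omega = \epsilon_1+\epsilon_2$. Your local checks at the two minimal primes do yield $\Ann_A(\omega)=0$, since $A$ is Cohen--Macaulay (so $\Ass A = \Min A$) and Gorenstein at those primes; you should, however, add a word on why $C$ is all of $S/(X,Y)$ rather than a proper quotient (e.g.\ $C$ is Cohen--Macaulay of dimension $d-1$ by \cite[Lemma 3.1]{GTT} and is cyclic over the domain $S/(X,Y)$ of that same dimension). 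What this buys is independence from the determinantal machinery of \cite{GTT}; what it costs is length.

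The genuine gap is in (1) $\Rightarrow$ (2). After the reduction to $d=1$ (which is the paper's reduction too, and is legitimate because the superficial element can be taken in $\fkm$ since $\fkm A$ is a reduction of $\fkM$), you assert, but do not prove, precisely the statements that carry the theorem's content in dimension one: that ``careful bookkeeping'' of $\Hom_R(I,R)$ forces $\mu_R(I)=1$, that $(0):_RI$ is principal, and that $R$ is a hypersurface of multiplicity $2$. No mechanism is given for extracting these from $\fkM \rmK_A \subseteq \omega A$. The paper needs two nontrivial steps here, which your sketch would have to supply: first (Lemma \ref{final}) it proves $I+J=\fkm$ with $J=(0):_RI$, by writing the canonical generator as $f=(a\iota,b)$, using $f\notin \fkM\rmK_A$ (via \cite[Corollary 3.10]{GTT}, as $A$ is not a DVR) and $I\cap J=(0)$ (Gorensteinness of $A$ at minimal primes) to rule out the case $a\in\fkm$; second (Corollary \ref{cor}) it deduces $\rmK_{R/I}\cong J\cong \fkm/I$ and invokes Burch's theorem ($\id_{R/I}\fkm/I<\infty$ forces $R/I$ to be a DVR) to get $R/I$ and $R/J$ regular, hence $I$ and $J$ principal. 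Moreover, your phrase ``induction reduces to $\dim R=1$'' does not work as stated: condition (2) is a presentation of $R$ itself and does not formally lift from a presentation of $R/(f_1,\dots,f_{d-1})$. The paper instead lifts the numerical consequences ($R/I$, $R/J$ regular, $\mu_R(I)=\mu_R(J)=1$, $\fkm=(x,y,f_1,\dots,f_{d-1})$, and $\rme(R)=2$ via $0\to R\to R/I\oplus R/J\to R/(I+J)\to 0$) and then shows the surjection $S/[(X)\cap(Y)]\to R$ is an isomorphism by comparing multiplicities; this lifting step, together with the dimension-one analysis itself, is missing, so the (1) $\Rightarrow$ (2) half of your proposal is a plan rather than a proof.
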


The notion of almost Gorenstein local ring ({\it AGL ring} for short) is one of the generalization of Gorenstein rings, which originated in the paper \cite{BF} of V. Barucci and R. Fr\"oberg in 1997. They introduced the notion for one-dimensional analytically unramified local rings and developed a beautiful theory, investigating the semigroup rings of numerical semigroups. In 2013 the first author, N. Matsuoka, and T. T. Phuong \cite{GMP} extended  the notion to arbitrary Cohen-Macaulay local rings but still of dimension one. The research of \cite{GMP} has been succeeded by two works \cite{GTT} and \cite{CGKM} in 2015 and 2017, respectively. In \cite{CGKM} one can find the notion of $2$-almost Gorenstein local ring ({\it $2$-AGL ring} for short) of dimension one, which is a generalization of AGL rings. Using the Sally modules of canonical ideals, the authors show that $2$-$\AGL$ rings behave well as if they were twins of AGL rings. The purpose of the research \cite{GTT} of the first author, R. Takahashi, and N, Taniguchi started in a different direction. They have extended the notion of AGL ring to higher dimensional Cohen-Macaulay local/graded rings, using the notion of Ulrich modules (\cite{BHU}). Here let us briefly recall their definition for the local case.

\begin{defn}\label{def}
Let $(R,\fkm)$ be a Cohen-Macaulay local ring of dimension $d$, possessing the canonical module $\rmK_R$. Then we say that $R$ is an $\AGL$ ring, if there exists an exact sequence
$$
0 \to R \to \rmK_R \to C \to 0
$$
of $R$-modules such that either $C = (0)$ or $C \ne (0)$ and $\mu_R(C) = \e^0_\fkm(C)$, where  $\mu_R(C)$ denotes the number of elements in a minimal system of generators of $C$ and $$\e^0_\fkm(C) = \lim_{n\to \infty}(d-1)!{\cdot}\frac{\ell_R(C/\fkm^{n+1}C)}{n^{d-1}}$$ denotes the multiplicity of $C$ with respect to the maximal ideal $\fkm$ (here $\ell_R(*)$ stands for the length).
\end{defn}

We explain a little about Definition \ref{def}. Let $(R,\fkm)$ be a Cohen-Macaulay local ring of dimension $d$ and assume that $R$ possesses the canonical module $\rmK_R$. The condition of Definition \ref{def} requires that  $R$ is embedded into   $\rmK_R$ and even though $R \ne \rmK_R$, the difference $C = \rmK_R/R$ between $\rmK_R$ and $R$ is an Ulrich $R$-module (\cite{BHU}) and behaves well. In particular, the condition is equivalent to saying that $\m C = (0)$, when $\dim R = 1$ (\cite[Proposition 3.4]{GTT}). In general, if $R$ is an AGL ring of dimension $d > 0$, then $R_\fkp$ is a Gorenstein ring for every $\fkp \in \Ass R$, because $\dim_RC \le d-1$ (\cite[Lemma 3.1]{GTT}).

The research on almost Gorenstein local/graded rings is still in progress, exploring, e.g., the problem of when the Rees algebras of ideals/modules are almost Gorenstein rings (see \cite{GMTY1, GMTY2, GMTY3, GMTY4, GRTT, Ta}) and the reader  can consult \cite{GTT} for several basic results on almost Gorenstein local/graded rings. For instance, non-Gorenstein AGL rings are G-regular in the sense of \cite{T} and all the known Cohen-Macaulay local rings of finite Cohen-Macaulay representation type are AGL rings. Besides, the authors explored the question of when the idealization $A = R \ltimes M$ is an AGL ring, where $(R,\m)$ is a Cohen-Macaulay local ring and $M$ is a maximal Cohen-Macaulay $R$-module. Because $A=R \ltimes M$ is a Gorenstein ring if and only if $M \cong \rmK_R$ as an $R$-module (\cite{R}), this question seems quite natural and  in \cite[Section 6]{GTT} the authors actually gave a complete answer to the question in the case where $M$ is a faithful $R$-module, that is the case $(0):_RM = (0)$. However, the case where $M$ is not faithful has been left  open, which our Theorem \ref{maintheorem} settles in the special case where $R$ is a Gorenstein local ring and $M  = I$ is an ideal of $R$ such that $R/I$ is a Cohen-Macaulay ring with $\dim R/I = \dim R$. For the case where $\dim R/I=d$ but $\depth R/I = d-1$ the question remains open (see Remark \ref{2.6}).

\section{Proof of Theorem \ref{maintheorem}}
The purpose of this section is to prove Theorem \ref{maintheorem}. To begin with, let us fix our notation. Unless otherwise specified, throughout this paper let $(R, \m) $ be a Gorenstein local ring with $d=\dim R >0$. Let $I$ be a non-zero ideal of $R$ such that $R/I$ is a Cohen-Macaulay ring with $\dim R/I = d$. Let $A = R \ltimes I$ be the idealization of $I$ over $R$.  Therefore, $A = R \oplus I$ as an $R$-module and the multiplication in $A$ is given  by
$$(a,x)(b,y) = (ab, bx + ay)$$
where $a,b \in R$ and $x,y \in I$. Hence $A$ is a Cohen-Macaulay local ring with $\dim A = d$, because $I$ is a maximal Cohen-Macaulay $R$-module.

For each $R$-module $N$ let $N^\vee = \Hom_R(N,R)$. We set $L = I^\vee  \oplus R$ and consider $L$ to be an $A$-module under the following action of $A$
$$(a,x)\circ (f,y) = (af, f(x) + ay),$$
where $(a,x) \in A$ and $(f,y) \in L$. Then it is standard to check that the map
$$A^\vee \to L, ~\alpha \mapsto (\alpha \circ j, \alpha (1))$$
is an isomorphism of $A$-modules, where $j : I \to A,~ x \mapsto (0,x)$ and $1 = (1,0)$ denotes the identity of the ring $A$. Hence by \cite[Satz 5.12]{HK} we get the following.

\begin{fact}\label{2.1}
$\rmK_A = L$, where $\rmK_A$ denotes the canonical module of $A$. 

\end{fact}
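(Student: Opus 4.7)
The plan is to reduce the claim to the Herzog--Kunz transfer formula for canonical modules along a finite local homomorphism and then to verify by a direct computation that the candidate $R$-linear isomorphism $A^\vee \to L$ written down in the excerpt intertwines the two $A$-actions. Since $A = R \oplus I$ as $R$-modules, $A$ is module-finite over $R$, and $R$ admits a canonical module (it is a homomorphic image of a regular local ring), the finite local homomorphism $R \hookrightarrow A$ satisfies the hypotheses of \cite[Satz 5.12]{HK}. That theorem yields an isomorphism $\rmK_A \cong \Hom_R(A, \rmK_R)$ of $A$-modules, where the $A$-module structure on the right comes from the natural action $((a,x)\cdot \varphi)(b,y) = \varphi((a,x)(b,y))$. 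Because $R$ is Gorenstein, $\rmK_R = R$, so this identifies $\rmK_A$ with $A^\vee$ as $A$-modules.

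Next I would identify $A^\vee$ with $L = I^\vee \oplus R$ using the map $\Phi : A^\vee \to L$, $\Phi(\alpha) = (\alpha \circ j, \alpha(1))$. That $\Phi$ is an $R$-module isomorphism is formal: the $R$-module decomposition $A = R \oplus I$ dualizes to $A^\vee \cong R^\vee \oplus I^\vee = R \oplus I^\vee$ and $\Phi$ records exactly this decomposition, the inverse being $(f,r) \mapsto \bigl((b,y) \mapsto br + f(y)\bigr)$.

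The one substantive point is the $A$-linearity of $\Phi$. Writing $\alpha$ as the pair $(f,r)$ with $f = \alpha \circ j$ and $r = \alpha(1)$, so that $\alpha(b,y) = br + f(y)$, a short calculation gives
$$
((a,x)\cdot \alpha)(b,y) \;=\; \alpha(ab,\, bx+ay) \;=\; abr + bf(x) + af(y),
$$
which coincides with the evaluation at $(b,y)$ of the element $(af,\, f(x) + ar) = (a,x)\circ (f,r) \in L$. Combined with the Herzog--Kunz identification this gives $\rmK_A = L$.

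I do not anticipate any real obstacle; the proof is essentially bookkeeping. The only thing to be careful about is the ordering of the summands of $L$ (which factor is $R$ and which is $I^\vee$) and the corresponding formulas for the $A$-action, to make sure that the isomorphism really does turn the action on $A^\vee$ inherited from the ring structure of $A$ into the one prescribed on $L$ in the excerpt.
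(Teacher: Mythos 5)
Your proposal is correct and follows the paper's own route: identify $\rmK_A$ with $\Hom_R(A,\rmK_R)=A^\vee$ via \cite[Satz 5.12]{HK} (using $\rmK_R=R$ since $R$ is Gorenstein), and check that $\alpha \mapsto (\alpha\circ j,\alpha(1))$ is an $A$-linear isomorphism onto $L$ --- exactly the verification the paper leaves as ``standard to check.'' Your explicit computation of the $A$-linearity, showing $\Phi((a,x)\cdot\alpha)=(af,\,f(x)+ar)=(a,x)\circ(f,r)$, correctly supplies that omitted bookkeeping.
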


We set $J = (0):_RI$. Let $\iota : I \to R$ denote the embedding.  Then taking the $R$-dual of the exact sequence
$$0 \to I \xrightarrow{\iota} R \to R/I \to 0,$$ we get the exact sequence 
$$0 \to (R/I)^\vee \to R^\vee \xrightarrow{\iota^\vee} I^\vee \to 0 = \Ext_R^1(R/I,R)\to \cdots$$
of $R$-modules, which shows $I^\vee = R{\cdot} \iota$. Hence $J = (0):_RI^\vee$ because $I = I^{\vee \vee}$ (\cite[Korollar 6.8]{HK}), so that  $I^\vee =R{\cdot}\iota \cong R/J$ as an $R$-module. Hence $I \cong (R/J) ^\vee = \rmK_{R/J}$ (\cite[Satz 5.12]{HK}). Therefore, taking again the $R$-dual of the exact sequence
$$0 \to J \to R^\vee \xrightarrow{\iota^\vee}  I^\vee \to 0,$$ we get the exact sequence $0 \to I \xrightarrow{\iota}  R \to J^\vee \to 0$ of $R$-modules, whence $J^\vee \cong R/I$, so that $J \cong (R/I) ^\vee = \rmK_{R/I}$. Summarizing the arguments, we get the following.

\begin{fact}\label{2.2}
$I \cong (R/J) ^\vee = \rmK_{R/J}$ and $J \cong (R/I) ^\vee = \rmK_{R/I}$.
\end{fact}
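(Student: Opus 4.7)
The plan is to dualize the defining sequence $0\to I\xrightarrow{\iota}R\to R/I\to 0$ and run both identifications out of it, using the reflexivity that is available because $R$ is Gorenstein. Two preliminary observations drive everything. Since $R/I$ is Cohen--Macaulay of dimension $d=\dim R$, it is a maximal Cohen--Macaulay $R$-module, so the standard Ext vanishing over the Gorenstein ring $R$ yields $\Ext^i_R(R/I,R)=0$ for all $i>0$. The depth lemma applied to the sequence above then shows $\depth_R I=d$, so $I$ itself is MCM and hence reflexive: $I=I^{\vee\vee}$.

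The $R$-dual of $0\to I\xrightarrow{\iota}R\to R/I\to 0$ therefore reads
\[
0\longrightarrow(R/I)^\vee\longrightarrow R\xrightarrow{\iota^\vee}I^\vee\longrightarrow 0,
\]
and a direct computation identifies $(R/I)^\vee=\{a\in R:aI=0\}$ with $J$. Since $\dim R/I=d$, the canonical-module description of a CM quotient of full dimension, namely $\rmK_{R/I}=\Hom_R(R/I,R)$, gives the second isomorphism $J\cong(R/I)^\vee=\rmK_{R/I}$ immediately. The same dualized sequence also exhibits $I^\vee=R\cdot\iota$ as cyclic with annihilator exactly $J$: one containment is clear, and the reverse uses biduality, since any $a\in R$ killing the generator $\iota$ kills all of $I^\vee$, hence kills $I^{\vee\vee}=I$, so lies in $J$. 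Thus $I^\vee\cong R/J$.

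Dualizing once more and invoking biduality for $I$ gives $I\cong I^{\vee\vee}\cong(R/J)^\vee$. The first isomorphism is completed by identifying $(R/J)^\vee$ with $\rmK_{R/J}$, which by the same canonical-module description requires $R/J$ to be Cohen--Macaulay of dimension $d$. This is the main point to verify, and it follows cleanly: $R/J\cong I^\vee$, and the $R$-dual of an MCM module over a Gorenstein ring is again MCM, so $R/J$ is MCM and the identification $(R/J)^\vee=\rmK_{R/J}$ is valid. Hence $I\cong\rmK_{R/J}$.
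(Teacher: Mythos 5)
Your proof is correct and follows essentially the same route as the paper: dualize $0 \to I \xrightarrow{\iota} R \to R/I \to 0$, use the vanishing of $\Ext^1_R(R/I,R)$ and the reflexivity of the maximal Cohen--Macaulay module $I$ to identify $I^\vee \cong R/J$ and then $I \cong (R/J)^\vee$. The only (harmless) divergences are that you read off $J \cong (R/I)^\vee = \rmK_{R/I}$ directly from the first dualized sequence instead of dualizing a second time as the paper does, and you are more explicit about why $(R/J)^\vee$ may be identified with $\rmK_{R/J}$ (namely, $R/J \cong I^\vee$ is maximal Cohen--Macaulay).
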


Notice that  $\rmr (A) = 2$ by \cite[Satz 6.10]{HK} where $\rmr(A)$ denotes the Cohen-Macaulay type of $A$, because $A$ is not a Gorenstein ring (as $I \not\cong R$; see \cite{R}) but $\rmK_A$ is generated by two elements;
$\rmK_A = R{\cdot} (\iota,0) + R {\cdot} (0,1)$.

We denote by $\fkM=\m \times I$ the maximal ideal of $A$. Let us begin with the following.

\begin{lemma}\label{final}
Let $d=1$. Then the following conditions are equivalent.
\begin{enumerate} [$(1) $]
\item $A$ is an $\mathrm{AGL}$ ring.
\item $I +J = \fkm$.
\end{enumerate}
When this is the case, $I \cap J = (0)$.
\end{lemma}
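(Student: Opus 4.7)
The strategy is to apply the dimension-one characterization from \cite[Proposition 3.4]{GTT}: since $\rmr(A) = 2$, $A$ is not Gorenstein, and so $A$ is $\AGL$ if and only if there exists an $A$-linear embedding $\phi \colon A \hookrightarrow \rmK_A$ with $\fkM \rmK_A \subseteq \phi(A)$. Working inside the explicit model $\rmK_A = L = R\iota \oplus R$ from Fact \ref{2.1}, a direct computation from $(\alpha, \beta) \circ (f, y) = (\alpha f,\, f(\beta) + \alpha y)$ shows $\fkM L = \fkm\iota \oplus \fkm$.

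For (2) $\Rightarrow$ (1) together with the final assertion, I first show $I \cap J = (0)$. Any $z \in I \cap J$ satisfies $zI = 0$ (as $z \in J = (0):_R I$) and $zJ \subseteq IJ = 0$ (as $z \in I$), so $z\fkm = z(I + J) = 0$, forcing $z = 0$ since $\Soc R = (0)$ in the Gorenstein ring $R$ of positive dimension. Then I define $\phi \colon A \to L$ by $\phi(a, x) = (a\iota,\, a + x)$. It is $A$-linear by a direct check, injective because $\ker \phi = \{(a, -a) \mid a \in I \cap J\} = 0$, and satisfies $\fkM L \subseteq \phi(A)$: for each $\mu \in \fkm$, writing $\mu = \mu' + \mu''$ with $\mu' \in I$ and $\mu'' \in J$ gives $\phi(\mu', -\mu') = (\mu\iota, 0)$ and $\phi(\mu'', \mu') = (0, \mu)$, covering the generators of $\fkm\iota \oplus \fkm$.

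For (1) $\Rightarrow$ (2), I take any embedding $\phi$ provided by the criterion, set $\xi := \phi(1) = (c_1\iota, c_2) \in L$, and extract two necessary conditions from $\fkM L \subseteq A\xi$: projecting along the $A$-submodule $0 \oplus R$ yields $\fkm \subseteq c_1 R + J$, and realizing each $(0, \beta)$ with $\beta \in \fkm$ inside $A\xi$ yields $\fkm \subseteq c_1 I + c_2(J :_R c_1)$. I then rule out $c_1 \in \fkm$: if $c_1 \in J$, the first condition gives $\fkm = J$, hence $I \subseteq \Soc R = (0)$, contradicting $I \ne (0)$; if $c_1 \in \fkm \setminus J$, the first condition makes the maximal ideal of $R/J$ principal and nonzero, which by Fact \ref{2.2} (furnishing that $R/J$ is Cohen-Macaulay of dimension one) makes $R/J$ a DVR, so $J :_R c_1 = J$ and $c_1 I \subseteq \fkm^2$, reducing the second condition to $\fkm \subseteq \fkm^2 + c_2 J$; whether $c_2$ is a unit or lies in $\fkm$, a Nakayama-type argument collapses this to $\fkm \subseteq \fkm^2$ or $\fkm = J$, each absurd. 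Thus $c_1$ is a unit, and the second condition becomes $\fkm \subseteq I + c_2 J$; if $c_2 \in \fkm$, then $\fkm \subseteq I + \fkm J$, and Nakayama on the local ring $R/I$ yields $\fkm = I$, contradicting $\dim R/I = 1$. Hence $c_2$ is also a unit and $\fkm = I + J$.

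The main obstacle is the case analysis in the final direction: converting the abstract inclusion $\fkM \rmK_A \subseteq A\xi$ into workable arithmetic on $(c_1, c_2)$ and eliminating every non-unit subcase. The decisive ingredient is the DVR structure of $R/J$ supplied by Fact \ref{2.2}, which gives $J :_R c_1 = J$ and permits the concluding Nakayama step.
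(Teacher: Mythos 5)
Your proof is correct, and the two directions compare differently with the paper. For (2) $\Rightarrow$ (1) you have essentially the paper's argument: your $\phi$ is exactly multiplication by $f=(\iota,1)$, and the verification that $\fkM \rmK_A\subseteq \phi(A)$ via the decomposition $\mu=\mu'+\mu''$ with $\mu'\in I$, $\mu''\in J$ is the same computation; you merely make the injectivity of $\phi$ explicit (via $I\cap J=(0)$), which the paper leaves to the cited criterion. The genuine divergence is in (1) $\Rightarrow$ (2) and in how $I\cap J=(0)$ is obtained. The paper first proves $I\cap J=(0)$ from (1) by localizing $A$ at its minimal primes and invoking Reiten's theorem, and it uses \cite[Corollary 3.10]{GTT} to guarantee $\varphi(1)\notin\fkM \rmK_A$, so that one coordinate of $\varphi(1)$ is a unit; the surviving bad case ($a\in\fkm$, $b$ a unit) is then excluded using $\fkm=J+\fkm^2$ together with $I\cap J=(0)$. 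You instead read off the two containments $\fkm\subseteq c_1R+J$ and $\fkm\subseteq c_1I+c_2(J:_Rc_1)$ directly from $\fkM \rmK_A\subseteq A\xi$ and eliminate every case with $c_1\in\fkm$ (including the possibility that both coordinates are non-units, which the paper never needs to consider) using that $R/J$ is one-dimensional Cohen--Macaulay (Fact \ref{2.2}, since $R/J\cong I^\vee$ is maximal Cohen--Macaulay), hence a DVR once $\fkm/J$ is principal, plus Nakayama; and you get $I\cap J=(0)$ from (2) by the socle argument $z\fkm=z(I+J)=(0)$ and $\depth R>0$. Your route is more self-contained (no appeal to \cite[Corollary 3.10]{GTT}, no localization/Reiten step) and shows in addition that both coordinates of $\phi(1)$ are units, at the cost of a longer case analysis; the paper's route is shorter given the cited results. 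Since the equivalence is established either way, proving the final assertion $I\cap J=(0)$ under (2) rather than under (1) is equally legitimate.
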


\begin{proof} (2) $\Rightarrow$ (1) We set $f = (\iota,1) \in \rmK_A$ and $C = \rmK_A/Af$. Let $\alpha \in \fkm$ and $\beta \in I$. Let us write $\alpha = a + b$ with $a \in I$ and $b \in J$. Then because 
$$(\alpha, 0)(0,1)=(0,\alpha) =  (b\iota, a+b) = (b,a)(\iota,1), ~~(0,\beta)(0,1) = (0,0),$$
we get $\fkM C = (0)$, whence $A$ is an AGL ring.

 (1) $\Rightarrow$ (2) We have $I \cap J = (0)$. In fact, let $\fkp \in \Ass R$ and set $P = \fkp \times I$. Hence $P \in \Min A$. Assume that $IR_\fkp \ne (0)$. Then since $A_P = R_\fkp \ltimes IR_\fkp$ and $A_P$ is a Gorenstein local ring, $IR_\fkp \cong R_\fkp$ (\cite{R}), so that $JR_\fkp = (0)$. Therefore, $(I \cap J)R_\fkp = (0)$ for every $\fkp \in \Ass R$, whence $I \cap J = (0)$.

Now consider the exact sequence
$$0 \to A \xrightarrow{\varphi} \rmK_A \to C \to 0$$ 
of $A$-modules such that $\fkM C= (0)$. We set $f = \varphi (1)$. Then $f \not\in \fkM \rmK_A$ by \cite[Corollary 3.10]{GTT}, because $A$ is not a discrete valuation ring (DVR for short). We identify $\rmK_A = I^\vee \times R$ (Fact \ref{2.1}) and  write $f =(a\iota, b)$ with $a, b \in R$. Then $a \not\in \fkm $ or $b \not\in \fkm$, since $f = (a,0)(\iota,0) + (b,0)(0,1) \not\in \fkM \rmK_A$.

Firstly, assume that $a \not\in \fkm$. Without loss of generality, we may assume $a = 1$, whence $f = (\iota,b)$. Let $\alpha \in \fkm$. Then since $(\alpha,0)(0,1) \in Af$, we can write $(\alpha, 0)(0,1) = (r,x)(\iota,b)$ with some $r \in R$ and $x \in I$. Because $$(0, \alpha) = (\alpha, 0)(0,1)=(r,x)(\iota,b)=(r\iota, x + rb),$$ we get
$$r \in (0):_R\iota = J, ~~~\ \ \alpha = x + rb \in I + J.$$
Therefore, $\fkm = I+J$.

Now assume that $a \in \fkm$. Then since $b \not\in \fkm$, we may assume $b = 1$, whence $f = (a\iota, 1)$. Let $\alpha \in \fkm$ and write $(\alpha, 0)(\iota,0) = (r,x)(a\iota,1)$ with $r \in R$ and $x \in I$. Then since $(\alpha \iota,0)= \left((ra)\iota, ax + r\right)$, we get
$$\alpha - ra \in J, ~~~\ \  r = -xa \in (a), $$ 
so that $\alpha \in J +(a^2) \subseteq J + \fkm^2$, whence $\fkm = J$. Because $I \cap J = (0)$, this implies $I = (0)$, which is  absurd.  Therefore,  $a \not\in \fkm$, whence  $I + J = \fkm$.
\end{proof}

\begin{cor}\label{cor}
Let $d = 1$. Assume that $A = R \ltimes I$ is an $\mathrm{AGL}$ ring. Then both $R/I$ and $R/J$ are discrete valuation rings and $\mu_R(I) = \mu_R(J)=1$. Consequently, if $R$ is a homomorphic image of a regular local ring, then $R$ has the presentation $$R = S/[(X)\cap (Y)]$$ for some two-dimensional regular local ring $(S,\n)$ with $\n = (X,Y)$, so that $I=(x)$ and $J=(y)$, where $x$, $y$ respectively denote the images of $X$, $Y$ in $R$.
\end{cor}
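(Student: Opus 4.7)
The plan is to derive the DVR conclusions from Lemma \ref{final} and Fact \ref{2.2} via an endomorphism-ring computation, and then obtain the presentation by standard Auslander-Buchsbaum arguments in a $2$-dimensional regular local ring.

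First I would set $T = R/I$ and observe that the composition $J \hookrightarrow R \twoheadrightarrow T$ has kernel $J \cap I = (0)$ and image $(I+J)/I = \fkm_T$ by Lemma \ref{final}, so $\fkm_T \cong J$ as $T$-modules (the $T$-structure being well-defined since $IJ = 0$). Combining with $J \cong \rmK_T$ from Fact \ref{2.2}, I obtain $\fkm_T \cong \rmK_T$, whence
$$
\End_T(\fkm_T) \;\cong\; \End_T(\rmK_T) \;=\; T,
$$
using the standard fact that the canonical module of a Cohen-Macaulay local ring has trivial endomorphism ring. The main step would then be the general observation that a $1$-dimensional CM local ring $T$ with $\End_T(\fkm_T) = T$ must be a DVR. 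I would prove this by contradiction: if $\fkm_T$ is not principal it is not invertible, so $\fkm_T \cdot \fkm_T^{-1} \subsetneq T$, and the only proper ideal of $T$ containing $\fkm_T$ is $\fkm_T$ itself, giving $\fkm_T \cdot \fkm_T^{-1} = \fkm_T$ and hence $\fkm_T^{-1} \subseteq \End_T(\fkm_T) = T$. But applying $\Hom_T(-, T)$ to $0 \to \fkm_T \to T \to k \to 0$ yields $\fkm_T^{-1}/T \cong \Ext_T^1(k, T)$ of $k$-dimension $r(T) \geq 1$, a contradiction. So $R/I$ is a DVR; by the symmetric argument (using $I \cong \rmK_{R/J}$), so is $R/J$. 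Then $\fkm_T \cong J$ cyclic over the DVR $T$ forces $\mu_R(J) = 1$, and analogously $\mu_R(I) = 1$.

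Assuming $R$ is a homomorphic image of a regular local ring, note $R$ cannot itself be a DVR (else $\dim R/I = 0$), so $\fkm = I \oplus J$ with $\mu_R(I) = \mu_R(J) = 1$ gives $\mathrm{edim}(R) = 2$. Take a minimal presentation $R = S/\fka$ with $(S, \fkn_S)$ regular local of dimension $2$ and $\fka \subseteq \fkn_S^2$, and lift $I = (x)$, $J = (y)$ to $X, Y \in \fkn_S$; by Nakayama $\fkn_S = (X, Y)$, a regular system of parameters. Auslander-Buchsbaum gives $\pd_S R = 1$ (as $R$ is CM of depth $1$ over the regular local ring $S$ of dimension $2$), so $\fka$ is principal, $\fka = (f)$ with $f \in \fkn_S^2$. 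The relation $IJ = 0$ forces $XY \in (f)$, and comparing orders in $\fkn_S$ (namely $XY \notin \fkn_S^3$ while $f \in \fkn_S^2$) shows the factor $g$ with $XY = fg$ is a unit. Hence $(f) = (XY) = (X) \cap (Y)$, completing the presentation. The main obstacle is the DVR step — specifically, recognizing the isomorphism $\rmK_{R/I} \cong \fkm_{R/I}$ and converting it into a DVR conclusion via the $\End$-ring argument; once these are in hand, the generator counts and the presentation are routine.
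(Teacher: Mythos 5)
Your proposal is correct, but both of its key steps take a genuinely different route from the paper. For the DVR step, both arguments start from the same isomorphism $\fkm/I \cong J \cong \rmK_{R/I}$ (via Lemma \ref{final} and Fact \ref{2.2}); the paper then simply quotes Burch's theorem, in the form of \cite[Theorem 1.1 (1)]{GH}, using $\id_{R/I}\fkm/I=\id_{R/I}\rmK_{R/I}=1<\infty$, whereas you convert the isomorphism into $\End_{R/I}(\fkm/I)\cong\End_{R/I}(\rmK_{R/I})=R/I$ and run the classical fractional-ideal argument ($\fkm_T\fkm_T^{-1}=\fkm_T$ when $\fkm_T$ is not principal, hence $\fkm_T^{-1}\subseteq\End_T(\fkm_T)=T$, contradicting $\fkm_T^{-1}/T\cong\Ext^1_T(k,T)\ne(0)$); this is self-contained and elementary, at the cost of the small but standard point that the \emph{natural} map $T\to\End_T(\fkm_T)$ is surjective (which does follow, since conjugation by the isomorphism $\fkm_T\cong\rmK_T$ carries it to the natural map $T\to\End_T(\rmK_T)$). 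For the presentation, the paper computes $\rmv(R)=\rme(R)=2$ from $\fkm^2=(x+y)\fkm$ and shows the canonical surjection $S/[(X)\cap(Y)]\to R$ is injective by the length comparison $\ell_S(S/(XY,X+Y))=2=\ell_R(R/(x+y)R)$; you instead use Auslander--Buchsbaum to get $\pd_SR=1$, hence $\fka=(f)$ principal with $f\in\fkn^2$, and then the order argument $XY\in(f)$, $XY\notin\fkn^3$ to conclude $\fka=(XY)=(X)\cap(Y)$. Both presentations implicitly use the same standard reduction to a two-dimensional regular local ring $S$ with $\fka\subseteq\fkn^2$, and your order argument correctly exploits that $X,Y$ is a regular system of parameters; so the two proofs are of comparable length, with yours avoiding the multiplicity computation and the citation of \cite{GH}, and the paper's avoiding the homological dimension count.
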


\begin{proof}[Proof of Corollary $\ref{cor}$]
Since $I + J = \fkm$ and $I \cap J = (0)$, $\rmK_{R/I}  \cong J \cong \fkm/I$ by Fact \ref{2.2}. Hence $R/I$ is a DVR by Burch's Theorem (see, e.g., \cite[Theorem 1.1 (1)]{GH}), because $\id_{R/I}\fkm/I =\id_{R/I}\rmK_{R/I}= 1 < \infty$, where $\id_{R/I}(*)$ denotes the injective dimension. We similarly get that $R/J$ is a DVR, since $\rmK_{R/J} \cong I \cong {\fkm/J}$. Consequently, $\mu_R(I) = \mu_R(J)= 1$. We write $I = (x)$ and $J = (y)$. Hence $\fkm = I + J = (x,y)$. Since  $xy= 0$, we have $\fkm^2 = (x^2, y^2) = (x+y)\fkm$. Therefore, $\rmv(R) = \rme(R) = 2$ because $R$ is not a DVR, where $\rmv(R)$ (resp. $\rme(R)$) denotes the embedding dimension of $R$ (resp. the multiplicity $\rme_\m^0(R)$ of $R$ with respect to $\m$). Suppose now that $R$ is a homomorphic image of a regular local ring. Let us write $R = S/\fka$ where $\fka$ is an ideal in a  two-dimensional regular local ring $(S, \fkn)$  and  choose $X,Y \in \fkn$ so that $x, y$ are the images of $X,Y$ in $R$, respectively. Then $\fkn = (X,Y)$, since $\fka \subseteq \fkn^2$. We consider the canonical epimorphism $$\varphi: S/[(X) \cap (Y)] \to R$$ and get that $\varphi$ is  an isomorphism, because $$\ell_S \left(S/ (XY, X+Y)\right) = 2 = \ell_{R} \left(R/(x+y) R \right).$$ Thus $\fka = (X) \cap (Y)$ and $R = S/[(X) \cap (Y)]$.
\end{proof}

We note the following.

\begin{prop}\label{ex}
Let $S$ be a regular local ring of dimension $d + 1$~$(d > 0)$ and let $X,Y$ be a part of a regular system of parameters of $S$. We set $R = S/[(X) \cap (Y)]$ and $I=(x)$, where $x$ denotes the image of $X$ in $R$. Then $I \ne (0)$, $R/I$ is a Cohen-Macaulay ring with $\dim R/I = d$, and the idealization $A = R \ltimes I$ is an $\mathrm{AGL}$ ring.
\end{prop}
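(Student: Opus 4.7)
The plan is to construct a short exact sequence $0 \to A \xrightarrow{\varphi} \rmK_A \to C \to 0$ whose cokernel $C$ is Ulrich as an $A$-module, thereby verifying Definition \ref{def} directly. First I would dispense with the elementary parts: $I \ne (0)$ follows from $X \notin (X) \cap (Y) = (XY)S$, and $R/I = S/(X)$ is a regular local ring of dimension $d$, hence Cohen-Macaulay. Let $y$ denote the image of $Y$ in $R$ and set $J = (y)R$. Lifting to $S$ and using that $\gcd(X, Y) = 1$ in the UFD $S$, I would verify both $J = (0):_R I$ and $I \cap J = (0)$: any relation $rx = 0$ in $R$ lifts to $r'X \in (X) \cap (Y) = (XY)$, which forces $Y \mid r'$; the identical trick yields $I \cap J = (0)$. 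Combined with Fact \ref{2.1} and the computation preceding Fact \ref{2.2}, this gives the $R$-module identification $\rmK_A = I^\vee \oplus R \cong (R/J) \oplus R$.

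Next I would take the $A$-linear map $\varphi : A \to \rmK_A$ defined by $\alpha \mapsto \alpha \circ (\iota, 1)$, which in the $R/J \oplus R$ description reads $\varphi((a, u)) = (\bar{a}, a + u)$. Injectivity is immediate: $\varphi((a, u)) = 0$ forces $a \in J$ and $a = -u \in I$, so $a \in I \cap J = (0)$ and $u = 0$. To identify $C := \Coker \varphi$, I would introduce the $R$-linear surjection
$$
\psi : R/J \oplus R \longrightarrow R/(I + J), \qquad (\bar{r}, s) \longmapsto \overline{s - r},
$$
and show $\ker \psi = \varphi(A)$. The inclusion $\varphi(A) \subseteq \ker \psi$ is clear; conversely, if $s - r = u + j$ with $u \in I$, $j \in J$, then $a := r + j$ satisfies $\varphi((a, u)) = (\bar{r}, s)$. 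Hence $C \cong R/(I + J) = S/(X, Y)$, a regular local ring of dimension $d - 1$.

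Finally I would verify that $C$ is Ulrich. Since $C$ is cyclic, $\mu_A(C) = 1$. The $A$-action on $C$ factors through $A \twoheadrightarrow R \twoheadrightarrow R/(I + J)$, so $\fkM \cdot C$ coincides with the action of the maximal ideal of the regular local ring $S/(X, Y)$ on itself; consequently $\e^0_\fkM(C)$ equals the multiplicity of $S/(X, Y)$, namely $1$. Thus $\mu_A(C) = 1 = \e^0_\fkM(C)$, confirming that $A$ is an AGL ring. The main obstacle will be correctly tracking the $A$-module structure on $\rmK_A$ (distinct from the simpler $R$-module structure) when checking that $\varphi$ is $A$-linear and that $\psi$ really identifies the cokernel; modulo that bookkeeping, the argument is direct.
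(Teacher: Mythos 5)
Your proof is correct, but it takes a genuinely different route from the paper. The paper never touches Definition \ref{def} directly: it presents $I=(x)$ via $0 \to (y) \to R \to (x) \to 0$, deduces $A \cong R[T]/(yT,T^2) = S[T]/(XY,YT,T^2)$, recognizes this as the ring defined by the $2\times 2$ minors of $\left(\begin{smallmatrix} X&Y&T\\ T&Y&0\end{smallmatrix}\right)$ with $X,Y,T$ part of a regular system of parameters of $S[T]_{\fkP}$, and then simply cites \cite[Theorem 7.8]{GTT}. You instead verify the definition by hand: using Fact \ref{2.1} and $I^\vee \cong R/J$ you take $f=(\iota,1)$, check injectivity of $\varphi$ via $I\cap J=(0)$, and identify $\Coker \varphi \cong R/(I+J)=S/(X,Y)$, a cyclic module of dimension $d-1$ and multiplicity one, hence Ulrich. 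Your computations all go through (including the bookkeeping you flag: $(0\times I)\cdot \rmK_A \subseteq \varphi(A)$ and $J\cdot C=(0)$, so the $A$-action on $C$ does factor through $S/(X,Y)$, and $\e^0_{\fkM}(C)=1$ matches the $(d-1)!$-normalized multiplicity since $\dim C = d-1$ exactly). What the paper's argument buys is brevity and a structural explanation (idealizations here are determinantal rings already known to be AGL); what yours buys is self-containedness — no appeal to \cite[Theorem 7.8]{GTT} — and an explicit exhibition of the Ulrich cokernel, which in effect extends the computation of Lemma \ref{final} ((2) $\Rightarrow$ (1), the element $f=(\iota,1)$) from dimension one to all dimensions for this particular pair $(R,I)$.
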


\begin{proof}
Let $y$ be the image of $Y$ in $R$. Then $(y) = (0):_Rx$ and we have the presentation 
$$0 \to (y) \to R \to (x) \to 0$$
of the $R$-module $I=(x)$, whence $A = R[T]/(yT, T^2)$, where $T$ is an indeterminate. Therefore
$$A = S[T]/(XY, YT, T^2).$$ Notice that $(XY, YT, T^2)$ is equal to the ideal generated by the $2 \times 2$ minors of the matrix $\Bbb M = \left(\begin{smallmatrix}
X&Y&T\\
T&Y&0\\
\end{smallmatrix}\right)$ and we readily get by \cite[Theorem 7.8]{GTT} that $A = R\ltimes I$ is an AGL ring, because $X,Y,T$ is a part of a regular system of parameters of the regular local ring  $S[T]_\fkP$, where $\fkP = \fkn S[T] + (T)$.
\end{proof}

We are now ready to prove Theorem \ref{maintheorem}.

\begin{proof}[Proof of Theorem $\ref{maintheorem}$]
By Proposition \ref{ex} we have only to show the implication $(1) \Rightarrow (2)$. Consider the exact sequence
$$0 \to A \to \rmK_A \to C \to 0$$
of $A$-modules such that $C$ is an Ulrich $A$-module. Let $\fkM = \fkm \times I$ stand for the maximal ideal of $A$. Then since $\fkm A \subseteq \fkM \subseteq \overline{\fkm A}$ (here $\overline{\fkm A}$ denotes the integral closure of $\fkm A$) and the field $R/\fkm$ is infinite, we can choose a superficial sequence $f_1, f_2, \ldots, f_{d-1} \in \fkm$ for $C$ with respect to $\fkM$ so that $f_1, f_2, \ldots, f_{d-1}$ is also a part of a system of parameters for both $R$ and $R/I$. We set $\fkq = (f_1, f_2, \ldots, f_{d-1})$ and $\overline{R} = R/\fkq$. Let $\overline{I} = (I + \fkq)/\fkq$ and $\overline{J} = (J+\fkq)/\fkq$. Then since $f_1, f_2, \ldots, f_{d-1}$ is a regular sequence for $R/I$,  by the exact sequence
$$0 \to I \to R \to R/I \to 0$$
we get the exact sequence
$$0 \to I/\fkq I \to \overline{R} \to R/(I + \fkq) \to 0,$$
so that $I/\fkq I \cong \overline{I}$ as an $\overline{R}$-module. Hence 
$$A/\fkq A = \overline{R} \ltimes (I/\fkq I) \cong \overline{R} \ltimes \overline{I}.$$ 
Remember that $A/\fkq A $ is an AGL ring by \cite[Theorem 3.7]{GTT}, because $f_1, f_2, \ldots, f_{d-1}$ is a superficial sequence of $C$ with respect $\fkM$ and $f_1, f_2, \ldots, f_{d-1}$ is an $A$-regular sequence. Consequently, thanks to Corollary \ref{cor}, $\overline{R}$ is a DVR and $\mu_{\overline{R}}(\overline{I}) = 1$. Hence $R/I$ is a regular local ring  and $\mu_R(I) = 1$, because $I/\fkq I \cong  \overline{I}$. Let $I = (x)$. Then $R/J \cong I=(x)$, since $J = (0):_RI$. Because $f_1, f_2, \ldots, f_{d-1}$ is a regular sequence for the $R$-module $I$, $f_1, f_2, \ldots, f_{d-1}$ is a regular sequence for $R/J$, so that we get the exact sequence
$$0 \to J/\fkq J \to \overline{R} \to R/(J + \fkq) \to 0.$$ Therefore, $\overline{J} \cong J/\fkq J$ and since $R/(J + \fkq) \cong I/\fkq I \cong \overline{I}$, we have $\overline{J} = (0):_{\overline{R}}\overline{I}$. Hence  $R/J$ is a regular local ring and $\mu_R(J)=1$, because $\overline{R}/\overline{J}$ is a DVR and $\mu_{\overline{R}}(\overline{J}) = 1$ by Corollary \ref{cor}.

Let $J = (y)$ and let $\overline{\fkm} = \fkm/\fkq$. Then by Lemma  \ref{final} we have $\overline{\fkm} = \overline{I} + \overline{J}$, whence $\fkm = (x,y, f_1, f_2, \ldots, f_{d-1})$. Therefore $\mu_R(\fkm) = d+1$, since $R$ is not a regular local ring. On the other hand, since both $R/I$ and $R/J$ are regular local rings, considering the canonical exact sequence
$$0 \to R \to R/I \oplus R/J \to R/(I+J) \to 0$$ (notice that $I \cap J=(0)$ for the same reason as in the proof of Lemma \ref{final}), we readily get $\rme(R) = 2$. We now choose a regular local ring  $(S,\fkn)$ of dimension $d+1$ and an ideal $\fka$ of $S$ so that $R = S/\fka$. Let $X, Y, \Z_1, Z_2, \ldots, Z_{d-1}$ be the elements of $\n$ whose images in $R$ are equal to $x, y, f_1, f_2, \ldots, f_{d-1}$, respectively. Then $\n = (X, Y, Z_1, Z_2, \ldots, Z_{d-1})$, since $\fka \subseteq \n^2$. Because $(X) \cap (Y) \subseteq \fka$ as  $xy = 0$, we get a surjective homomorphism $$ S/[(X) \cap (Y)] \to R$$ of rings, which has to be an isomorphism, because both the Cohen-Macaulay local rings $S/[(X) \cap (Y)]$ and $R$ have the same multiplicity $2$. This completes the proof of Theorem \ref{maintheorem}.
\end{proof}

\begin{remark}\label{2.6}
Let $(S, \n)$ be a two-dimensional regular local ring and let $X,Y$ be a regular system of parameters of $S$. We set $R = S/[(X) \cap (Y)]$. Let $x$, $y$ denote the images of $X$, $Y$ in $R$, respectively. Let $n \ge 2$ be an integer. Then $\dim R/(x^n)=1$ but $\depth R/(x^n) = 0$. We have $x^n=x^{n-1}(x+y)$, whence $(x^n) \cong (x)$ as an $R$-module because $x+y$ is a non-zerodivisor of $R$, so that $R\ltimes (x^n)$ is an $\AGL$ ring (Proposition \ref{ex}). This example shows that there are certain ideals $I$ in Gorenstein local rings $R$ of dimension $d > 0$ such that $\dim R/I = d$ and $\depth R/I = d-1$, for which the idealizations $R \ltimes I$ are AGL rings. However, we have no idea to control them.
\end{remark}



\end{document}